\newcommand{\assign}{:=}
\newcommand{\cdummy}{\cdot}
\newcommand{\mathD}{\mathrm{D}}
\newcommand{\mathd}{\mathrm{d}}
\newcommand{\tmop}[1]{\ensuremath{\operatorname{#1}}}
\newcommand{\tmtextit}[1]{{\itshape{#1}}}
\newtheorem{theorem}{Theorem}[section]
\newtheorem{definition}[theorem]{Definition}
\newtheorem{lemma}[theorem]{Lemma}
\newtheorem{remark}[theorem]{Remark}
\newtheorem{proposition}[theorem]{Proposition}
\newcommand{\CS}{\mathscr{S}}
\newcommand{\CF}{\mathscr{F}}
\newcommand{\LL}{\mathcal{L}}
\newcommand{\R}{\mathbb{R}}
\newcommand{\Z}{\mathbb{Z}}
\newcommand{\T}{\mathbb{T}}
\newcommand{\E}{\mathbb{E}}
\renewcommand{\P}{\mathbb{P}}
\newcommand{\1}{\mathds{1}}
\newcommand{\dd}{\mathrm{d}}
\begin{document}

\title{Probabilistic approach to the stochastic Burgers equation}
\date{}
\author{
  Massimiliano Gubinelli\\
  Hausdorff Center for Mathematics\\
   \& Institute for Applied Mathematics\\
   Universit{\"a}t Bonn \\
  \texttt{gubinelli@iam.uni-bonn.de}
  \and
  Nicolas Perkowski\thanks{Financial support by the DFG via Research Unit FOR 2402 is gratefully acknowledged.} \\
  Institut f\"ur Mathematik \\
  Humboldt--Universit\"at zu Berlin \\
  \texttt{perkowsk@math.hu-berlin.de}
}

\maketitle

\begin{center}
\emph{Dedicated to Michael R{\"o}ckner on the occasion of his 60th
  Birthday.}
\end{center}

\begin{abstract}
  We review the formulation of the stochastic Burgers equation as a martingale problem. One way of understanding the difficulty in making sense of the equation is to note that it is a stochastic PDE with distributional drift, so we first review how to construct finite-dimensional diffusions with distributional drift. We then present the uniqueness result for the stationary martingale problem of~\cite{GP16}, but we mainly emphasize the heuristic derivation and also we include a (very simple) extension of~\cite{GP16} to a non-stationary regime.
\end{abstract}

\section{Introduction}

In the past few years there has been a high interest and tremendous progress in so called \emph{singular stochastic PDEs} which involve very irregular noise terms (usually some form of white noise) and nonlinear operations. The presence of the irregular noise  prevents the solutions from being smooth functions and therefore the nonlinear operations appearing in the equation are a priori ill-defined and can often only be made sense of with the help of some form of renormalization. The major breakthrough in the understanding of these equations was achieved by Hairer~\cite{H12} who noted that in a one-dimensional setting rough path integration can be used to make sense of the ill-defined nonlinearities and then used this insight to solve the Kardar-Parisi-Zhang (KPZ) equation for the first time~\cite{H13KPZ} and then proceeded to develop his regularity structures~\cite{H14} which extend rough paths to higher dimensions and allow to solve a wide class of singular SPDEs. Alternative approaches are paracontrolled distributions~\cite{GIP15} and renormalization group techniques~\cite{K16}, but they all have in common that they essentially bypass probability theory and are based on pathwise arguments. A general probabilistic understanding of singular SPDEs still seems out of reach, but recently we have been able to prove uniqueness for the stationary martingale problem for the stochastic Burgers equation~\cite{GP16} (an equivalent formulation of the KPZ equation) which was introduced in 2010 by Gon\c calves and Jara~\cite{GJ10, GJ14}\footnote{The paper~\cite{GJ14} is the revised and published version of~\cite{GJ10}.}, see also~\cite{GJ13}. The aim of this short note is to give a pedagogical account of the probabilistic approach to Burgers equation. We also slightly extend the result of~\cite{GP16} to a simple non-stationary setting.

One way of understanding the difficulty in finding a probabilistic formulation to singular SPDEs is to note that they are often SPDEs with distributional drift. As an analogy, you may think of the stochastic \emph{ordinary} differential equation
  \begin{equation}\label{eq:sde}
     \dd x_t = b(x_t) \dd t + \sqrt{2} \dd w_t,
  \end{equation}
  where $x \colon \R_+ \to \R$, $w$ is a one-dimensional Brownian motion, and $b \in \CS'$, the Schwartz distributions on $\R$. It is a nontrivial problem to even make sense of~\eqref{eq:sde} because $x$ takes values in $\R$ but for a point $z \in \R$ the value $b(z)$ is not defined. One possible solution goes as follows: If there exists a nice antiderivative $B'=b$, then we consider the measure $\mu(\dd x) = (\exp( B(x)) /Z)\dd x$ on $\R$, where $Z>0$ is a normalization constant. If $b$ itself is a continuous function, then this measure is invariant with respect to the dynamics of~\eqref{eq:sde} and the solution $x$ corresponds to the Dirichlet form $\mathcal{E}(f,g) = \int \partial_x f(x) \partial_x g(x) \mu (\dd x)$. But $\mathcal{E}$ makes also sense for distributional $b$, so in that case the solution to~\eqref{eq:sde} can be simply defined as the Markov process corresponding to~$\mathcal{E}$. Among others, the works~\cite{Mathieu1994, Mathieu1995} are based on this approach.
  
  An alternative viewpoint is to consider the martingale problem associated to~\eqref{eq:sde}. The infinitesimal generator of $x$ should be $\LL = b \partial_x + \partial_{xx}^2$ and we would like to find a sufficiently rich space of test functions $\varphi$ for which
  \[
     M^\varphi_t = \varphi(x_t) - \varphi(x_0) - \int_0^t \LL \varphi(x_s) \dd s
  \]
  is a continuous martingale. But now our problem is that the term $b \partial_x \varphi$ appearing in $\LL \varphi$ is the product of a distribution and a smooth function. Multiplication with a non-vanishing smooth function does not increase regularity (think of multiplying by $1$ which is perfectly smooth), and therefore $\LL \varphi$ will only be a distribution and not a function. But then the expression $ \int_0^t \LL \varphi(x_s) \dd s$ still does not make any sense! The solution is to take non-smooth functions $\varphi$ as input into the generator because multiplication with a non-smooth function can increase regularity (think of multiplying an irregular function $f > 0$ with $1/f$): If we can solve the equation $\LL \varphi = f$ for a given continuous function $f$, then we can reformulate the martingale problem by requiring that
  \[
     M^\varphi_t = \varphi(x_t) - \varphi(x_0) - \int_0^t f(x_s) \dd s
  \]
  is a continuous martingale. If we are able to solve the equation $\LL \varphi = f$ for all continuous bounded functions $f$, then we explicitly obtain the domain of the generator $\LL$ as $\mathrm{dom}(\LL) = \{ \varphi: \LL \varphi = f \text{ for some } f \in C_b\}$. In that case the distribution of $x$ is uniquely determined through the martingale problem. Of course $\LL \varphi = f$  is still not easy to solve because $\LL \varphi$ contains the term $b \partial_x \varphi$ which is a product between the distribution $b$ and the non-smooth function $\partial_x \varphi$ and therefore not always well defined. In the one-dimensional time-homogeneous framework that we consider here it is however possible to apply a transformation that removes this ill-behaved term and maps the equation $\LL \varphi = f$ to a well posed PDE provided that $b$ is the derivative of a continuous function. This was carried out in the very nice papers~\cite{FRW03, FRW04}, where it was also observed that the solution is always a Dirichlet process (sum of a martingale plus a zero quadratic variation term), and that if $(b_n)$ is a sequence of smooth functions converging in $\CS'$ to $b$, then
  \begin{equation}\label{eq:martingale-approx}
     x_t = x_0 + \lim_{n \to \infty} \int_0^t b_n(x_s) \dd s + \sqrt{2} w_t.
  \end{equation}
  So while $b(x_s)$ at a fixed time $s$ does not make any sense, there is a time-decorrelation effect happening that allows to define the integral $\int_0^t b(x_s) \dd s$ using the above limit procedure.
  
The transformation of the PDE breaks down as soon as we look at multi-dimensional or time-inhomogeneous diffusions. But the philosophy of solving $\LL \varphi = f$ to identify the domain of the generator carries over. It is then necessary to deal with PDEs with distributional coefficients, and using paraproducts, rough paths, and paracontrolled distributions respectively, the construction of $x$ could recently be extended to the multi-dimensional setting for comparably regular $b$ (H\"older regularity $> -1/2$)~\cite{FIR17}, a more irregular one-dimensional setting~\cite{DD16} (H\"older regularity $>-2/3$), and finally the irregular multi-dimensional setting~\cite{CC15} (H\"older regularity $>-2/3$). In all these works the time-homogeneity no longer plays a role. Let us also mention that all these works are concerned with probabilistically weak solutions to the equation. If $b$ is a non-smooth function rather than a distribution, then it is possible to find a unique probabilistically strong solution. This goes back to~\cite{Z74, V80} and very satisfactory results are due to Krylov and R\"ockner~\cite{KR05}. For a pathwise approach that extends to non-semimartingale driving noises such as fractional Brownian motion see also~\cite{CG15}. A good recent survey on such ``regularization by noise'' phenomena is~\cite{F10}.

Having gained some understanding of the finite-dimensional case, we can now have a look at the stochastic Burgers equation on the one-dimensional torus $\T = \R / \Z$, $u \colon \R_+ \times \T \to \R$,
\begin{equation}\label{eq:burgers}
   \dd u_t = \Delta u_t \dd t + \partial_x u^2_t \dd t + \sqrt{2} \dd \partial_x W_t,
\end{equation}
where $\partial_t W$ is a space-time white noise, that is the centered Gaussian process with covariance $\E[\partial_t W(t,x) \partial_t W(s,y)] = \delta(t-s) \delta(x-y)$. We would like to understand~\eqref{eq:burgers} as an infinite-dimensional analogue of~\eqref{eq:sde} for a particular $b$. It is known that the dynamics of $u$ are invariant under the measure $\mu$, the law of the space white noise on $\T$, which means that for fixed times $u_t$ is a distribution with regularity $C^{-1/2-\varepsilon}$ (i.e. the distributional derivative of a $(1/2-\varepsilon)$ H\"older continuous function) and not a function. But then the square $u^2_t$ does not make any sense analytically because in general it is impossible to multiply distributions in $C^{-1/2-\varepsilon}$. Let us however simply ignore this problem and try to use the probabilistic structure of $u$ to make sense of $u^2$. For simplicity we start $u$ in the stationary measure $\mu$. Then $u_t^2$ is simply the square of the white noise, and this is something that we can explicitly compute. More precisely, let $\rho \in C^\infty_c(\R)$ be a nonnegative function with $\int_\R \rho(x) \dd x = 1$ and define the convolution
\[
  (\rho^N \ast u)(x) := \sum_{k \in \Z} u(N \rho(N(x - k -\cdot)) .
\]
Then $\rho^N \ast u$ is a smooth periodic function that converges to $u$, and in particular $(\rho^N \ast u)^2$ is well defined. Let us test against some $\varphi \in C^\infty$ and consider $(\rho^N \ast u)^2(\varphi) = \int_\T (\rho^N \ast u)^2(x) \varphi(x) \dd x$. The first observation we then make is that $\E[\int_\T (\rho^N \ast u)^2(x) \varphi(x) \dd x] = c_N \int_\T \varphi(x) \dd x$ with a diverging constant $c_N \to \infty$. So to have any hope to obtain a well-defined limit, we should rather consider the renormalized $((\rho^N \ast u)^2 - c_N)(\varphi)$. Of course, the constant $c_N$ vanishes under differentiation so we will not see it in the equation that features the term $\partial_x u^2$ and not $u^2$. Now that we subtracted the mean $\E[(\rho^N \ast u)^2(\varphi)]$, we would like to show that the variance of the centered random variable stays uniformly bounded in $N$. This is however not the case, and in particular $((\rho^N \ast u)^2 - c_N)(\varphi)$ does not converge in $L^2(\mu)$. But $L^2(\mu)$ is a Gaussian Hilbert space and $((\rho^N \ast u)^2 - c_N)$ lives for all $N$ in the second (homogeneous) chaos. Since for sequences in a fixed Gaussian chaos convergence in probability is equivalent to convergence in $L^2$ (see~\cite{J97}, Theorem~3.50), $((\rho^N \ast u)^2 - c_N)(\varphi)$ does not converge in any reasonable sense. Therefore, the square $u^2(\varphi)$ (or rather the renormalized square $u^{\diamond 2} (\varphi) = (u^2 - \infty) (\varphi)$) cannot be declared as a random variable! However, and as far as we are aware it was Assing~\cite{A02} who first observed this, the renormalized square does make sense as a distribution on $L^2(\mu)$. More precisely, if $F(u)$ is a ``nice'' random variable (think for example of $F(u) = f(u(\varphi_1), \dots, u(\varphi_n))$ for $f \in C^\infty_c(\R)$ and $\varphi_k \in C^\infty(\T)$, $k = 1, \dots, n$), then $\E[((\rho^N \ast u)^2 - c_N)(\varphi) F(u)]$ converges to a limit that we denote with $\E[u^{\diamond 2}(\varphi) F(u)]$ and that does not depend on the mollifier $\rho$. This means that we cannot evaluate $u \mapsto u^{\diamond 2}(\varphi)$ pointwise, but it makes sense when tested against smooth random variables by evaluating the integral $\int_{C^{-1/2-\varepsilon}} u^{\diamond 2}(\varphi) F(u) \mu(\dd u)$. Compare that with a distribution $T \in \CS'(\R)$ on $\R$ for which the pointwise evaluation $T(x)$ makes no sense, but we can test $T$ against smooth test functions $\psi$ by evaluating the integral $\int_\R T(x) \psi(x) \dd x$. Of course  in finite dimensions the integration is canonically performed with respect to  the Lebesgue measure, while in infinite dimensions we have to pick a reference measure which here is the invariant measure $\mu$.

So $u^{\diamond 2}(\varphi)$ is a distribution on the infinite-dimensional space $C^{-1/2-\varepsilon}$ and therefore the stochastic Burgers equation~\eqref{eq:burgers} is an infinite-dimensional analog to~\eqref{eq:sde}. We would like to use the same tools as in the finite dimensional case, and in particular we would like to make sense of the martingale problem for $u$. However, while in finite dimensions it is possible to solve the equation $\LL \varphi = f$ under rather general conditions, now this would be an infinite-dimensional PDE with a distributional coefficient. There exists a theory of infinite-dimensional PDEs, see for example~\cite{DPZ02, BKRS15}, but at the moment it seems an open problem how to solve such PDEs with distributional coefficients. On the other side, if we simply plug in a smooth function $F$ into the formal generator $\LL$, then $\LL F$ is only a distribution and not a function and therefore no smooth $F$ can be in the domain of the Burgers generator. Assing~\cite{A02} avoided this problem by considering a ``generalized martingale problem'', where the drift term $\int_0^t \partial_x u_s^2 \dd s = \int_0^t \partial_x u_s^{\diamond 2} \dd s$ never really appears. However, it is still not known whether solutions to the generalized martingale problem exist and are unique. This was the state until 2010, when Gon\c calves and Jara~\cite{GJ10} introduced a new notion of martingale solution to the stochastic Burgers equation by stepping away from the infinitesimal approach based on the generator and by making use of the same time-decorrelation effect that allowed us to construct $\int_0^t b(x_s) \dd s$ in~\eqref{eq:martingale-approx} despite the fact that $b(x_s)$ itself makes no sense. In the next chapter we discuss their approach and how to prove existence and uniqueness of solutions.
  
\section{Energy solutions to the stochastic Burgers equation}

We would now like to formulate a notion of martingale solution to the stochastic Burgers equation
\[
   \dd u_t = \Delta u_t \dd t + \partial_x u^2_t \dd t + \sqrt{2} \dd \partial_x W_t.
\]
As discussed above the infinitesimal picture is not so clear because the drift is a distribution in an infinite-dimensional space. So the idea of Gon{\c c}alves and Jara~{\cite{GJ14}} is to rather translate the formulation~\eqref{eq:martingale-approx} to our context. Let us call $u$ a \tmtextit{martingale solution} if $u \in C
(\mathbb{R}_+, \CS')$, where $\CS' = \CS'(\T)$ are the (Schwartz) distributions on $\T$, and if there exists $\rho \in C^\infty_c(\R)$ with
$\int_{\mathbb{R}} \rho (x) \mathd x = 1$ such that with $\rho^N = N \rho (N
\cdummy)$ the limit
\[ \int_0^{\cdummy} \partial_x u_s^2 \mathd s (\varphi) \assign \lim_{N
   \rightarrow \infty} \int_0^{\cdummy} (u_s \ast \rho^N)^2 (- \partial_x
   \varphi) \mathd s \]
exists for all $\varphi \in C^\infty(\T)$, uniformly on compacts in probability. Note that because of the derivative $\partial_x$ here it is not necessary to renormalize the square by subtracting a large constant.
Moreover, we require that for all $\varphi \in C^\infty(\T)$ the process
\begin{equation}\label{eq:martingale}
   M_t (\varphi) \assign u_t (\varphi) - u_0 (\varphi) - \int_0^t u_s (\Delta
   \varphi) \mathd s - \int_0^t \partial_x u_s^2 \mathd s (\varphi), \qquad t
   \geqslant 0,
\end{equation}
is a continuous martingale with quadratic variation $\langle M (\varphi)
\rangle_t = 2 \| \partial_x \varphi \|_{L^2}^2 t$. It can be easily shown that martingale solutions exist, for example by considering Galerkin approximations to the stochastic Burgers equation.

On the other side, martingale solutions are very weak and give us absolutely no control on
the nonlinear part of the drift, so it does not seem possible to show that they are unique. To overcome this we should add further conditions
to the definition, and Gon{\c c}alves and Jara assumed in~{\cite{GJ14}} additionally that $u$ satisfies an \tmtextit{energy
estimate}, that is
\[ \mathbb{E} \left[ \left( \int_s^t \{ (u_r \ast \rho^N)^2 (- \partial_x
   \varphi) - (u_r \ast \rho^M)^2 (- \partial_x \varphi) \} \mathd r \right)^2
   \right] \lesssim \frac{(t - s)}{M \wedge N} \| \partial_x \varphi
   \|_{L^2}^2 . \]
The precise form of the estimate is not important, we will only need that it
implies $\int_0^{\cdummy} \partial_x u_s^2 \mathd s \in C (\mathbb{R}_+,
\CS')$ and that for every test function $\varphi$ the process
$\int_0^{\cdummy} \partial_x u_s^2 \mathd s (\varphi)$ has zero quadratic
variation. Consequently, $u (\varphi)$ is a Dirichlet process (sum of a continuous local martingale and a zero quadratic variation process) and admits an
It{\^o} formula~{\cite{RV}}. Any martingale solution that satisfies the
energy estimate is called an \tmtextit{energy solution}.

This is still a very weak notion of solution, and there does not seem to be
any way to directly compare two different energy solutions and to show that
they have the same law. To see why the additional structure coming from the
energy estimate is nonetheless very useful, let us argue formally for a
moment. Let $u$ be a solution to Burgers equation $\partial_t u = \Delta u +
\partial_x u^2 +  \partial_t \partial_x W$. Then $u = \partial_x h$ for the solution $h$
to the KPZ equation
\[ \partial_t h = \Delta h + (\partial_x h)^{\diamond 2} + \partial_t W = \Delta h + (\partial_x h)^{2} - \infty + \partial_t W . \]
But the KPZ equation can be linearized through the Cole-Hopf transform~\cite{BG97}: We
formally have $h = \log \phi$ for the solution $w$ to the linear stochastic heat
equation
\begin{equation}\label{eq:SHE}
   \partial_t \phi = \Delta \phi + \phi \partial_t W,
\end{equation}
which is a linear It{\^o} SPDE that can be solved using classical
theory~{\cite{W86,DPZ14,PR07}}. So if we can show that every energy
solution $u$ satisfies $u = \partial_x \log \phi$ where $\phi$ solves~\eqref{eq:SHE}, then
$u$ is unique. And the key difference between energy solutions and martingale solutions is that for energy solutions we have an It\^o formula that allows us to perform a change of variables that maps $u$ to a candidate solution to the stochastic heat equation.

So let $u$ be an energy solution, let $\rho \in \CS (\mathbb{R})$ be an even function with Fourier transform $\hat{\rho} = \int_\R e^{2 \pi i x \cdot} \rho(x) \dd x \in C^\infty_c (\mathbb{R})$ and such that $\hat{\rho} \equiv 1$ on a neighborhood of $0$, and define
\begin{equation}\label{eq:uL-def-periodic}
   u^L_t \assign \CF^{-1}_\T(\hat{\rho}(L^{-1}\cdot) \CF_\T u_t ) = \rho^L \ast u_t, \qquad t \geqslant 0
\end{equation}
where $\CF_\T u(k) \assign \int_\T e^{2 \pi i k x} u(x) \dd x$ respectively $\CF^{-1}_\T \psi(x) \assign \sum_{k \in \Z} e^{2\pi i k x} \psi(k)$ denote the Fourier transform (respectively inverse Fourier transform) on the torus. We then integrate $u^L$ by setting
\[
   h^L_t \assign \CF^{-1}_\T (\CF_\T{\Theta} \CF_\T(u^L_t)) = \Theta \ast u^L_t = (\Theta \ast {\rho}^L) \ast u_t =: \Theta^L \ast u_t, \qquad t \geqslant 0,
\]
where $\CF_\T{\Theta}(k) = \1_{k \neq 0} (2\pi i k)^{-1}$ and therefore $\partial_x (\Theta \ast u) = u - \int_\T u(x) \dd x$. Since $\int_\T u(x) \dd x$ is conserved by the stochastic Burgers equation, it suffices to prove uniqueness of $\partial_x (\Theta \ast u)$. Writing $h^L_t(x) = u_t(\Theta^L_x)$ for $\Theta^L_x(y) \assign (\Theta \ast {\rho}^L)(x - y)$, we get from~\eqref{eq:martingale}
\begin{align*}
   \dd h^L_t (x) 
   & = \Delta_x h^L_t(x) + u^2_t (\partial_x \Theta^L_x) \dd t + \sqrt{2} \dd W_t (\partial_x \Theta^L_x)
\end{align*}
for a white noise $W$, and $\dd \langle h^L(x) \rangle_t = 2 \| \partial_x \Theta^L_x \|_{L^2(\T)}^2 \dd t$. It is not hard to see that $\partial_x \Theta^L_x = {\rho}^L_x - 1$ for ${\rho}^L_x(y) = {\rho}^L(x - y)$. So setting $\phi^L_t (x) \assign e^{ h^L_t (x)}$ we have by the It\^o formula for Dirichlet processes~\cite{RV} and a short computation
\begin{align*}
  \dd \phi^L_t (x) & = \Delta_x \phi^L_t (x) \dd t + \sqrt{2}  \phi^L_t (x) \dd W_t ({\rho}^L_x) +  \dd R^L_t(x) +  K \phi^L_t(x) \dd t + \phi^L_t(x) \dd Q^L_t \\
  &\quad - \sqrt{2}  \phi^L_t(x) \dd W_t(1) - 2 \phi^L_t(x) \dd t,
\end{align*}
where
\begin{equation}\label{eq:RL}
   R^L_t (x) \assign \int_0^t \phi^L_s (x) \Big\{u^2_s (\partial_x \Theta^L_x) - \Big((u_s^L (x))^2 - \int_\T (u_s^L (y))^2 \dd y\Big) - K \Big\} \dd s
\end{equation}
for a suitable constant $K$ and
\begin{equation}\label{eq:QL}
  Q^L_t \assign \int_0^t \Big\{ - \int_\T((u^L_s)^2(y) - \| {\rho}^L \|_{L^2(\R)}^2) \dd y + 1 \Big\} \dd s.
\end{equation}

\begin{proposition}\label{prop:uniqueness}
   Let $u \in C (\mathbb{R}_+, \CS')$ be an
  energy solution to the stochastic Burgers equation such that
    \begin{equation}\label{eq:exp-control} \sup_{x \in \T, t \in [0,T]} \mathbb{E} [|e^{u_t(\Theta_x)}|^2] < \infty. \end{equation}
   If for $R^L$ and $Q^L$ defined in~\eqref{eq:RL} and~\eqref{eq:QL} respectively and every test function $\varphi$ the process $R^L(\varphi)$ converges to zero uniformly on compacts in probability and $Q^L$ converge to a zero quadratic variation process $Q$, then $u$ is unique in law and given by $u_t = \partial_x \log \phi_t + \int_\T u_0(y) \dd y$, where $\phi$ is the unique solution to the stochastic heat equation
  \[
     \dd \phi_t = \Delta \phi_t \dd t + \phi_t \dd W_t, \qquad \phi_0(x) = e^{u_0(\Theta_x)}.
  \]
\end{proposition}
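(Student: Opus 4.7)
The plan is to pass to the limit $L\to\infty$ in the equation for $\phi^L_t(x) := e^{h^L_t(x)}$ displayed just before the proposition, identify the limit $\phi_t(x) := e^{u_t(\Theta_x)}$ as a solution of the linear stochastic heat equation $\dd\phi_t = \Delta\phi_t\,\dd t + \phi_t\,\dd W_t$ with $\phi_0(x) = e^{u_0(\Theta_x)}$, and then invoke classical uniqueness for this linear It\^o SPDE~\cite{W86,DPZ14,PR07}. Since $h_t(x) = u_t(\Theta_x)$ is a primitive of $u_t$ up to the spatial mean, and the mean is conserved by the dynamics of~\eqref{eq:burgers}, uniqueness in law of $\phi$ yields uniqueness in law of $u$ through $u_t = \partial_x\log\phi_t + \int_\T u_0(y)\,\dd y$.

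I would handle each of the six terms on the right-hand side of the $\dd\phi^L_t(x)$ equation separately. The Laplacian and the linear drifts $K\phi^L_t\,\dd t$ and $-2\phi^L_t\,\dd t$ converge by dominated convergence, using the uniform $L^2$ bound from~\eqref{eq:exp-control}; $\dd R^L_t$ vanishes ucp by assumption. For the noise contribution one uses that $\partial_x\Theta^L_x = \rho^L_x - 1$, so the two It\^o integrals combine into the single integral $\sqrt 2\int_0^t\phi^L_s(x)\,\dd W_s(\partial_x\Theta^L_x)$, whose integrand converges in $L^2(\dd s\otimes\P)$ and whose limit is the multiplicative SHE noise $\sqrt 2\,\phi_s(x)\,\dd W_s$ (the spatially constant mode is harmless for the uniqueness of $\partial_x\log\phi$). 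Finally the Dirichlet integral $\int_0^t\phi^L_s(x)\,\dd Q^L_s$ passes to $\int_0^t\phi_s(x)\,\dd Q_s$ in the Russo--Vallois forward sense, using joint ucp convergence of $(\phi^L, Q^L)$ together with the zero quadratic variation of $Q$.

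The limit equation for $\phi_t(x)$ thus differs from the clean SHE only by the two extra finite-variation terms $(K-2)\phi_t\,\dd t$ and $\phi_t\,\dd Q_t$. These are removed by the change of variables $\psi_t(x) := \phi_t(x)\exp(-(K-2)t - Q_t)$: the It\^o formula for Dirichlet processes~\cite{RV} produces no cross-variation correction, because $Q$ has zero quadratic variation and the explicit deterministic exponential is of bounded variation. The transformed $\psi$ is then a mild solution of the clean SHE $\dd\psi_t = \Delta\psi_t\,\dd t + \psi_t\,\dd W_t$ with $\psi_0(x) = e^{u_0(\Theta_x)}$; its law is determined uniquely, and inverting the transformation pins down $\phi$, and thereby $u$.

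The main obstacle is the simultaneous passage to the limit in the It\^o integral against $W$ and the Dirichlet integral against $Q^L$, since $\phi^L_t(x)$ enters both as integrand while itself being only a Dirichlet process, not a semimartingale. The exponential moment bound~\eqref{eq:exp-control} is what upgrades ucp convergences to $L^2$ convergences of integrands and thereby makes the It\^o part go through, while the Russo--Vallois forward integral is the appropriate object for the Dirichlet part and provides the cancellations needed in the final exponential change of variables. A careful bookkeeping argument is also required to verify that the particular constant $K$ chosen in~\eqref{eq:RL}, together with the $-2$ drift from the It\^o computation of $e^{h^L}$, fits exactly into the exponential tilt $\psi = \phi\exp(-(K-2)t - Q)$; this amounts to matching the Wick renormalization of $(u^L_s)^2$ with the It\^o correction $\tfrac12\dd\langle h^L\rangle$, and it is what makes the whole program close.
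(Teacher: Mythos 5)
Your overall route---pass to the limit in the displayed equation for $\phi^L$, strip off the spatially constant correction terms, and invoke classical uniqueness for the linear stochastic heat equation---is exactly the strategy behind the paper's proof, which is simply a citation of Theorem~2.13 in~\cite{GP16}; the derivation preceding the proposition is the blueprint you follow. But there is a concrete step that fails as written: the limit equation does \emph{not} differ from the clean SHE ``only by the two extra finite-variation terms''. Since $\partial_x \Theta^L_x = \rho^L_x - 1$, the martingale part converges to $\sqrt{2}\,\phi_s(x)\,(\dd W_s(\delta_x) - \dd W_s(1))$, so a zero-mode \emph{martingale} term $-\sqrt{2}\,\phi\,\dd W(1)$ survives in the limit. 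Your tilt $\psi = \phi\exp(-(K-2)t - Q_t)$ is a bounded-variation-plus-zero-quadratic-variation factor and cannot remove a martingale term, so the assertion ``the transformed $\psi$ is then a mild solution of the clean SHE'' is false for your $\psi$. The repair stays inside your framework: tilt instead by the spatially constant Dirichlet process $\exp\bigl(\sqrt{2}\,W_t(1) + ct - Q_t\bigr)$ with $c$ chosen to absorb both $K-2$ and the It\^o correction of $\sqrt 2 W(1)$; the cross-variation of this factor with the remaining noise vanishes because $\langle \rho^L_x - 1, 1\rangle_{L^2(\T)} = 0$, and since the factor is spatially constant it drops out under $\partial_x \log$, so uniqueness of $u$ still follows. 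Note that the $-2\phi\,\dd t$ term in the paper's display is tied to this zero mode through $\|\rho^L_x - 1\|_{L^2(\T)}^2 = \|\rho^L\|^2 - 1$, so your $(K-2)$ bookkeeping silently mixes the It\^o correction of precisely the noise term you then discard; your parenthetical ``the constant mode is harmless'' gestures at the right idea but is inconsistent with the tilt you actually define.

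A second, smaller gap: condition~\eqref{eq:exp-control} controls only $e^{u_t(\Theta_x)}$, i.e.\ the \emph{limiting} integrand, whereas your $L^2(\dd s \otimes \P)$ convergence of $\phi^L_s(x) = e^{u_s(\Theta^L_x)}$ requires second-moment bounds \emph{uniform in $L$} to upgrade ucp convergence. These do not follow from~\eqref{eq:exp-control} by averaging: here $\rho$ has compactly supported Fourier transform and is therefore not nonnegative, so Jensen's inequality does not give $e^{u(\Theta^L_x)} \leq (\rho^L \ast e^{u(\Theta_\cdot)})(x)$. Some uniform-in-$L$ exponential moment assumption (or a strengthened convergence hypothesis on $R^L$, $Q^L$) must be added; the paper itself flags exactly this by remarking that a ``slightly stronger convergence'' than locally uniform is needed, with the full technical implementation carried out in~\cite{GP16}.
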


\begin{proof}
  This is Theorem~2.13 in~{\cite{GP16}}. Actually we need a slightly stronger convergence of $R^L(\varphi)$ and $Q^L$ than locally uniform, but to simplify the presentation we ignore this here.
\end{proof}

\begin{remark}
  The strategy of mapping the energy solution to the linear stochastic heat equation is
  essentially due to Funaki and Quastel~\cite{FQ15}, who used it in a different context to study the invariant measure of the KPZ equation. In their approach
  similar correction terms as $R^L$ and $Q^L$ appeared, and the fact
  that they were able to deal with them gave us the courage to proceed with
  the rather long computations that control $R^L$ and $Q^L$ in our setting.
\end{remark}

So to obtain the uniqueness of energy solutions we need to verify the
assumptions of Proposition~\ref{prop:uniqueness}. Unfortunately we are not able to do
this because while the energy condition gives us good control of the Burgers
nonlinearity $\int_0^{\cdummy} \partial_x u_s^2 \mathd s$, it does not allow
us to bound general additive functionals $\int_0^{\cdummy} F (u_s) \mathd s$
such as the ones appearing in the definition of $R^L$.

To understand which condition to add in order to control such additive
functionals, let us recall how this can be (formally) achieved for a Markov
process $X$ with values in a Polish space. Assume that $X$ is stationary with
initial distribution $\mu$, denote its infinitesimal generator with
$\mathcal{L}$, and let $\mathcal{L}^{\ast}$ be the adjoint of $\mathcal{L}$ in
$L^2 (\mu)$. Then $\mathcal{L}$ can be decomposed into a symmetric part
$\mathcal{L}_S = (\mathcal{L}+\mathcal{L}^{\ast}) / 2$ and an antisymmetric
part $\mathcal{L}_A = (\mathcal{L}-\mathcal{L}^{\ast}) / 2$. Moreover, if we
reverse time and set $\hat{X}_t = X_{T - t}$ for some fixed $T > 0$, then
$(\hat{X}_t)_{t \in [0, T]}$ is a Markov process in its natural filtration,
the backward filtration of $X$, and it has the generator $\mathcal{L}^{\ast}$.
See Appendix~1 of~{\cite{KL13}} for the case where $X$ takes values in
a countable space. So by Dynkin's formula for $F \in \tmop{dom}
(\mathcal{L})\cap\mathrm{dom}(\mathcal{L}^\ast)$ the process
\[ M^F_t = F (X_t) - F (X_0) - \int_0^t \mathcal{L}F (X_s) \mathd s, \qquad t
   \geqslant 0, \]
is a martingale, and
\[ \hat{M}^F_t = F (\hat{X}_t) - F (\hat{X}_0) - \int_0^t \mathcal{L}^{\ast} F
   (\hat{X}_s) \mathd s, \qquad t \in [0, T], \]
is a martingale in the backward filtration. We add these two formulas and
obtain the following decomposition which is reminiscent of the \tmtextit{Lyons-Zheng decomposition} of a Dirichlet process into a sum of forward martingale and a backward martingale:
\[ M^F_t + (\hat{M}^F_T - \hat{M}^F_{T - t}) = - 2 \int_0^t \mathcal{L}_S F
   (X_s) \mathd s, \]
so any additive functional of the form $\int_0^t
\mathcal{L}_S F (X_s) \mathd s$ is the sum of two martingales, one in the
forward filtration, the other one in the backward filtration. The predictable
quadratic variation of the martingale $M^F$ is
\[ \langle M^F \rangle_t = \int_0^t (\mathcal{L}F^2 (X_s) - 2 F (X_s)
   \mathcal{L}F (X_s)) \mathd s, \]
and
\[ \langle \hat{M}^F \rangle_t = \int_0^t (\mathcal{L}^{\ast} F^2 (\hat{X}_s)
   - 2 F (\hat{X}_s)\mathcal{L}^{\ast} F (\hat{X}_s)) \mathd s. \]
So the Burkholder-Davis-Gundy inequality gives for all $p \geqslant 2$
\begin{align*}
  \mathbb{E} \left[ \sup_{t \in [0, T]} \left| \int_0^t \mathcal{L}_S F (X_s)
  \mathd s \right|^p \right] & \lesssim T^{p / 2 - 1} \int_0^T \mathbb{E}
  [(\mathcal{L}F^2 (X_s) - 2 F (X_s) \mathcal{L}F (X_s))^{p / 2}] \mathd s\\
  &\qquad + T^{p / 2 - 1} \int_0^T \mathbb{E} [(\mathcal{L}^{\ast} F^2 (\hat{X}_s) -
  2 F (\hat{X}_s)\mathcal{L}^{\ast} F (\hat{X}_s))^{p / 2}] \mathd s\\
  & = T^{p / 2} \mathbb{E} [(\mathcal{L}F^2 (X_0) - 2 F (X_0) \mathcal{L}F
  (X_0))^{p / 2}] \\
  &\qquad + T^{p / 2}\mathbb{E} [(\mathcal{L}^{\ast} F^2 (X_0) - 2 F (X_0)
  \mathcal{L}^{\ast} F (X_0))^{p / 2}] ,
\end{align*}
where in the last step we used the stationarity of $X$. Moreover, if
$\mathcal{L}_A$ satisfies Leibniz rule for a first order differential
operator, then $\mathcal{L}_AF^2  - 2 F  \mathcal{L}_A F = 0$ and therefore
\[ \mathbb{E} \left[ \sup_{t \in [0, T]} \left| \int_0^t \mathcal{L}_S F
   (X_s) \mathd s \right|^p \right] \lesssim T^{p / 2} \mathbb{E}
   [(\mathcal{L}_S F^2 (X_0) - 2 F (X_0) \mathcal{L}_S F (X_0))^{p / 2}] . \]
That is, we can bound additive functionals of a stationary Markov process $X$
in terms of the symmetric part of the generator $\mathcal{L}_S$ and the
invariant measure $\mu$. We call this inequality the \tmtextit{martingale trick}.

For the stochastic Burgers equation this promises to be very powerful, because
formally an invariant measure is given by the law of the spatial white noise and the symmetric part of the generator is simply
the generator $\mathcal{L}_S$ of the Ornstein-Uhlenbeck process $\dd X_t = \Delta X_t \dd t + \dd \partial_x  W_t$, that is the linearized Burgers equation. This also suggests that
$\mathcal{L}_A$ is a first order differential operator because it corresponds
to the drift $\partial_x u^2$. We thus need a notion of solution to the
stochastic Burgers equation which allows us to make the heuristic
argumentation above rigorous. This definition was given by Gubinelli and Jara
in~{\cite{GJ13}}:

\begin{definition}
  Let $u$ be an energy solution to the stochastic Burgers equation. Then $u$
  is called a \tmtextit{forward-backward (FB) solution} if additionally the
  law of $u_t$ is that of the white noise for all $t \geqslant 0$, and for all
  $T > 0$ the time-reversed process $\hat{u}_t = u_{T - t}$, $t \in [0, T]$,
  is an energy solution to
  \[ \dd \hat{u}_t = \Delta \hat{u}_t \dd t - \partial_x \hat{u}^2_t \dd t + \sqrt{2} \dd \partial_x
     \hat{W}_t, \]
  where $\partial_t \hat{W}$ is a space-time white noise in the backward filtration.
\end{definition}

\begin{remark}
  {\cite{GJ13}} do not define FB-solutions, but they also call their
  solutions energy solutions. For pedagogical reasons we prefer here to
  introduce a new terminology for this third notion of solution. Also, the
  definition in~{\cite{GJ13}} is formulated slightly differently than
  above, but it is equivalent to our definition.
\end{remark}

Of course, we should first verify whether FB-solutions exist before we proceed to discuss their uniqueness. But existence is very easy and can be shown by Galerkin approximation. Also, it is known for a wide class of interacting particle systems that they are relatively compact under rescaling and all limit points are FB-solutions to Burgers equation~\cite{GJ14, GJS15, DGP16}. A general methodology that allows to prove similar results for many particle systems was developed in~\cite{GJ14}. So all that is missing in the convergence proof is the uniqueness of FB-solutions.

\begin{lemma}
  If $u$ is a FB-solution, then the martingale trick works:
  \[ \mathbb{E} \left[ \sup_{t \in [0, T]} \left| \int_0^t \mathcal{L}_S F
     (u_s) \mathd s \right|^p \right] \lesssim T^{p / 2} \mathbb{E}
     [\mathcal{E} (F (u_0))^{p / 2}], \]
  where $\mathcal{E} (F (u)) = 2 \int_{\mathbb{R}} | \partial_x \mathD_x F (u) |^2 \mathd x$ for the Malliavin derivative $\mathD$ which is defined in terms of the law of the white noise.
\end{lemma}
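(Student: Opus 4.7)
The plan is to make rigorous the Lyons--Zheng-type argument sketched in the paragraphs preceding the lemma. Fix a smooth cylinder functional $F(u) = f(u(\varphi_1),\dots,u(\varphi_n))$ with $f \in C^2_b(\R^n)$ and $\varphi_k \in C^\infty(\T)$; the bound for this class is enough by approximation. Because $u$ is an energy solution, each $u(\varphi_k)$ is a Dirichlet process (the continuous martingale $M(\varphi_k)$ plus the zero-quadratic-variation piece $\int_0^\cdummy u_s(\Delta\varphi_k)\,\dd s + \int_0^\cdummy \partial_x u_s^2\,\dd s(\varphi_k)$), so the It\^o formula for Dirichlet processes of~\cite{RV} yields a decomposition
\[ F(u_t) - F(u_0) = M^F_t + N^F_t, \]
where $M^F$ is a continuous local martingale with bracket
\[ \dd\langle M^F\rangle_t = 2\bigl\|\partial_x \mathD_\cdummy F(u_t)\bigr\|_{L^2(\T)}^2\,\dd t = \mathcal{E}(F(u_t))\,\dd t, \]
obtained from $\dd\langle M(\varphi_k),M(\varphi_l)\rangle_t = 2\langle\partial_x\varphi_k,\partial_x\varphi_l\rangle_{L^2}\,\dd t$ together with the Malliavin chain rule $\mathD_x F(u) = \sum_k \partial_k f \cdot \varphi_k(x)$ for the law of the space white noise. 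The drift part $N^F$ has zero quadratic variation and equals $\int_0^t [\mathcal{L}_S F + \mathcal{L}_A F](u_s)\,\dd s$, where $\mathcal{L}_S$ is the symmetric (Ornstein--Uhlenbeck) generator of $\dd X = \Delta X\,\dd t + \sqrt 2\,\dd\partial_x W$ and $\mathcal{L}_A F(u) = \langle \partial_x u^2, \mathD F(u)\rangle$ is the first-order drift from the Burgers nonlinearity.

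By the FB-hypothesis, $\hat u_t = u_{T-t}$ is itself an energy solution in the backward filtration, now to the Burgers equation with \emph{flipped} nonlinearity $-\partial_x \hat u^2$. The same reasoning gives a decomposition $F(\hat u_t) - F(\hat u_0) = \hat M^F_t + \hat N^F_t$ with $\hat M^F$ a continuous backward martingale of bracket $\mathcal{E}(F(\hat u_t))\,\dd t$, and $\hat N^F_t = \int_0^t [\mathcal{L}_S F - \mathcal{L}_A F](\hat u_s)\,\dd s$. Expressing $F(u_0) - F(u_t) = F(\hat u_T) - F(\hat u_{T-t})$ in both decompositions and adding the forward identity gives
\[ M^F_t + \bigl(\hat M^F_T - \hat M^F_{T-t}\bigr) = -\bigl[N^F_t + \hat N^F_T - \hat N^F_{T-t}\bigr]. \]
A change of variable $r = T-s$ inside $\hat N^F_T - \hat N^F_{T-t}$ rewrites the right-hand side as $-2\int_0^t \mathcal{L}_S F(u_s)\,\dd s$: the antisymmetric first-order pieces cancel (the rigorous content of the heuristic $\mathcal{L}_A F^2 - 2 F \mathcal{L}_A F = 0$), while the symmetric pieces add up.

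With this identity at hand, Doob's $L^p$-inequality and Burkholder--Davis--Gundy applied separately to $M^F$ (forward filtration) and $\hat M^F$ (backward filtration) yield
\[ \E\Bigl[\sup_{t\in[0,T]}\Bigl|\int_0^t \mathcal{L}_S F(u_s)\,\dd s\Bigr|^p\Bigr] \lesssim \E\bigl[\langle M^F\rangle_T^{p/2}\bigr] + \E\bigl[\langle \hat M^F\rangle_T^{p/2}\bigr] \lesssim \E\Bigl[\Bigl(\int_0^T \mathcal{E}(F(u_s))\,\dd s\Bigr)^{p/2}\Bigr]. \]
Jensen's inequality on the normalized time average $T^{-1}\int_0^T(\cdot)\,\dd s$ together with the stationarity $u_s \stackrel{d}{=} u_0$ bounds the last expectation by $T^{p/2}\,\E[\mathcal{E}(F(u_0))^{p/2}]$, which is exactly the claim. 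The main obstacle is the rigorous identification of $N^F$ and $\hat N^F$ with the claimed integrals of $\mathcal{L}_S F \pm \mathcal{L}_A F$: because $\partial_x u^2$ only makes sense as a limit of mollified squares, the cancellation of the antisymmetric part between the forward and backward decompositions should first be checked at the regularized level, where the It\^o formula applies unambiguously to cylinder functionals, and then passed to the limit with the aid of the energy estimate. Once this step is secured the remaining probabilistic estimates are standard.
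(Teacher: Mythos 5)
Your proposal is correct and takes essentially the same approach as the paper: the paper's own proof is just a citation to Lemma~2 of~\cite{GJ13} and Proposition~3.2 of~\cite{GP16}, and the argument there is precisely the forward--backward Lyons--Zheng decomposition sketched heuristically in the text preceding the lemma, which you carry out --- It\^o formula for Dirichlet processes on cylinder functionals, cancellation of the antisymmetric drift between the forward and the time-reversed energy-solution decompositions, then BDG, Jensen (for $p \geqslant 2$), and stationarity. You also correctly identify the one genuinely delicate point (identifying the drift parts at the mollified level and passing to the limit via the energy estimate), which is exactly how the cited proof handles it.
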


\begin{proof}
  This is Lemma~2 in~\cite{GJ13} or Proposition~3.2 in~{\cite{GP16}}.
\end{proof}

Given a FB-solution $u$ we can thus control any additive functional of the form $\int_0^{\cdummy}
\mathcal{L}_S F (u_s) \mathd s$. To bound $\int_0^{\cdummy} G (u_s) \mathd s$
for a given $G$ we therefore have to solve the Poisson equation $\mathcal{L}_S
F = G$. This is an infinite-dimensional partial differential equation and a
priori difficult to understand. However, we have to only solve it in $L^2
(\mu)$ which has a lot of structure as a Gaussian Hilbert space. In particular
we have the chaos decomposition
\[ F = \sum_{n = 0}^{\infty} W_n (f_n) \]
for all $F \in L^2 (\mu)$, where $f_n \in L^2 (\mathbb{R}^n)$ and $W_n (f_n)$
is the $n$-th order Wiener-It{\^o} integral over $f_n$. And the
action of the Ornstein-Uhlenbeck generator  on $W_n (f_n)$ has
a very simple form:
\[ \mathcal{L}_S W_n (f_n) = W_n (\Delta f_n), \]
where $\Delta = \partial_{x_1}^2 + \ldots + \partial_{x_n}^2$ is the Laplace
operator on $\mathbb{R}^n$, see Lemma~3.7 in~{\cite{GP16}}. This
reduces the equation $\mathcal{L}_S F = G$ to the infinite system of uncoupled
equations
\[ \Delta_n f_n = g_n, \qquad n \in \mathbb{N}_0, \]
where $G = \sum_n W_n (g_n)$.

To test the tools we have developed so far, let us apply them to the Burgers
nonlinearity:

\begin{lemma}
  Let $u$ be a FB-solution. There exists a unique process $\int_0^{\cdummy}
  u_s^{\diamond 2} \mathd s \in C (\mathbb{R}_+, \CS')$ such that
  $\partial_x \int_0^{\cdummy} u_s^{\diamond 2} \mathd s = \int_0^{\cdummy}
  \partial_x u_s^2 \mathd s$ and such that for all $T, p > 0$, $\alpha \in (0,
  3 / 4)$, and $\chi, \varphi \in \mathcal{S}$ with $\int_{\mathbb{R}} \chi
  (x) \mathd x = 1$ we have with $\chi^N = N \chi (N \cdummy)$
  \[ \lim_{N \rightarrow \infty} \mathbb{E} \left[ \left\| \int_0^{\cdummy}
     u_s^{\diamond 2} \mathd s (\varphi) - \int_0^{\cdummy} ((u_s \ast
     \chi^N)^2 - \| \chi^N \|_{L^2}^2) (\varphi) \mathd s
     \right\|^p_{C^{\alpha} ([0, T], \mathbb{R})} \right] = 0. \]
\end{lemma}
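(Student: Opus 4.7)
The plan is to establish the Cauchy property of the approximations
\[
  X^N_t \assign \int_0^t G^N_s(\varphi)\,\mathd s, \qquad G^N_s(\varphi) \assign \bigl((u_s \ast \chi^N)^2 - \|\chi^N\|_{L^2}^2\bigr)(\varphi),
\]
in $L^p(\Omega; C^\alpha([0,T]))$ for every $\alpha < 3/4$ and $p < \infty$, using the Wiener chaos decomposition of $u_s \sim \mu$ together with the martingale trick from the preceding lemma. Since the Wick-renormalised square lies entirely in the second homogeneous chaos, one writes $G^N_s(\varphi) = W_2(g^N)$ for a symmetric kernel $g^N = g^N(\varphi) \in L^2(\R^2)$ whose Fourier transform factors (up to constants) as $\widehat{\chi^N}(k_1)\widehat{\chi^N}(k_2)\widehat{\varphi}(k_1+k_2)$. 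For $N \leqslant M$ the telescoping difference $g^{N,M} \assign g^M - g^N$ is Fourier-supported in $\{|k_1| \vee |k_2| \gtrsim N\}$, since $\widehat{\chi^N} \equiv \widehat{\chi^M} \equiv 1$ in a neighbourhood of the origin; using $\LL_S W_2(f) = W_2(\Delta f)$ one then solves the Poisson equation $-\Delta f^{N,M} = g^{N,M}$ uniquely in the second chaos and obtains the Fourier bound $|\widehat{\nabla f^{N,M}}|^2 \leqslant N^{-2}|\widehat{g^{N,M}}|^2$.

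Set $F^{N,M} \assign -W_2(f^{N,M})$, so that $\LL_S F^{N,M} = G^M(\varphi) - G^N(\varphi)$. The martingale trick together with a direct calculation on the second chaos yields
\[
  \E\!\Bigl[\sup_{t \in [0,T]}|X^M_t - X^N_t|^p\Bigr] \lesssim T^{p/2}\,\E[\mathcal{E}(F^{N,M})^{p/2}] \lesssim_p T^{p/2} \|\nabla f^{N,M}\|_{L^2}^p,
\]
where hypercontractivity on the second chaos is used to pass from $p=2$ to arbitrary $p$. In parallel the stationary It\^o isometry supplies the direct bound
\[
   \E\bigl[|X^M_t - X^N_t - X^M_s + X^N_s|^p\bigr] \lesssim_p (t-s)^p \|g^{N,M}\|_{L^2}^p.
\]
A Fourier computation at the dyadic step $M = 2N$ reveals the scalings $\|g^{N,2N}\|_{L^2}^2 \sim N$ and $\|\nabla f^{N,2N}\|_{L^2}^2 \sim N^{-1}$, so the dyadic piece $Y^{(j)}_t \assign X^{2^{j+1}}_t - X^{2^j}_t$ obeys
\[
   \E\bigl[|Y^{(j)}_t - Y^{(j)}_s|^p\bigr] \leqslant C_p \min\bigl((t-s)^p 2^{jp/2},\; (t-s)^{p/2} 2^{-jp/2}\bigr).
\]

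Feeding this two-regime bound into Garsia--Rodemich--Rumsey produces the estimate $\E[\|Y^{(j)}\|_{C^\beta([0,T])}^p]^{1/p} \lesssim 2^{j(2\beta - 3/2)}$ for any $\beta \in (1/2, 3/4)$, which is summable over $j$ because $\beta < 3/4$. Hence $\sum_j Y^{(j)}$ converges in $L^p(\Omega; C^\beta([0,T]))$ and its limit defines $\int_0^\cdot u_s^{\diamond 2}\,\mathd s(\varphi)$, which is unique by completeness. Tracking the dependence of the bounds on Schwartz seminorms of $\varphi$ gives continuity as a map into $\CS'$, and testing the limit against $-\partial_x\varphi$ recovers the identity $\partial_x \int_0^\cdot u_s^{\diamond 2}\,\mathd s = \int_0^\cdot \partial_x u_s^2\,\mathd s$ via the definition of an energy solution.

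The hard step is precisely this dyadic balancing: the martingale trick in isolation delivers only $C^{1/2-}$ regularity and the direct bound only $C^{1-}$ with unbounded coefficient, and it is their interplay via the transition scale $|t-s| \sim 2^{-2j}$ that yields the summable $C^{3/4-}$ estimate. The essential inputs are the explicit Fourier kernel of the Wick square of white noise, the identity $\LL_S W_2(f) = W_2(\Delta f)$, and hypercontractivity on a fixed Gaussian chaos.
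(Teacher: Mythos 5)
Your proposal is correct and follows essentially the same route as the paper's cited proof (Proposition~3.15 and Corollary~3.17 of~\cite{GP16}): represent the mollified Wick square in the second homogeneous chaos, invert the Ornstein--Uhlenbeck generator there via $\mathcal{L}_S W_2(f) = W_2(\Delta f)$, combine the forward--backward martingale trick (with hypercontractivity to reach general $p$) with the trivial stationarity bound, and balance the two regimes at the transition scale $|t-s| \sim 2^{-2j}$ to obtain summable $C^{\beta}$ estimates for every $\beta < 3/4$ via Kolmogorov/Garsia--Rodemich--Rumsey. One small repair: a general $\chi \in \mathcal{S}$ with $\int_{\mathbb{R}} \chi(x)\, \mathd x = 1$ satisfies only $\hat{\chi}(0) = 1$ rather than $\hat{\chi} \equiv 1$ near the origin, so your Fourier-support claim for $g^{N,M}$ should be replaced by the quantitative bound $|\hat{\chi}(z) - 1| \lesssim |z|$ together with the rapid decay of $\hat{\chi}$, which yields the same scalings $\| g^{N,2N} \|_{L^2}^2 \sim N$ and $\| \nabla f^{N,2N} \|_{L^2}^2 \sim N^{-1}$, and the same cross-mollifier computation also gives the independence of the limit from $\chi$ that the statement requires.
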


\begin{proof}
  This is a combination of Proposition~3.15 and Corollary~3.17
  in~{\cite{GP16}}.
\end{proof}

The H{\"o}lder regularity of $\int_0^{\cdummy} \partial_x u_s^2 \mathd s (\varphi)$
for $\varphi \in \mathcal{S}$ is indeed only $3 / 4 - \varepsilon$ and not
better than that. This means that the process $u (\varphi)$ is not a
semimartingale. In particular, if we assume for
the moment that the stochastic Burgers equation defines a Markov process on a
suitable Banach space of distributions, then the map $u \mapsto u (\varphi)$
for $\varphi \in \CS$ is not in the domain of its generator. In fact it is an open problem to find any nontrivial
function in the domain of the Burgers generator.

Next, we would like to use the martingale trick to prove the convergence of
$R^L$ and $Q^L$ that we need in Proposition~\ref{prop:uniqueness}. However, while
for the Burgers nonlinearity the additive functional was of the form $\int_0^{\cdummy} G (u_s) \mathd s$ with a $G$ that has only
components in the second Gaussian chaos, the situation is not so simple for
$R^L$ because the factor $e^{u (\Theta^L_x)}$ has an infinite chaos
decomposition. While in principle it is still possible to write down the chaos
decomposition of $R^L$ exlicitly, we prefer to follow another route. In fact
there is a general tool for Markov processes which allows to control additive
functionals without explicitly solving the Poisson equation, the so called
\emph{Kipnis-Varadhan estimate}. It is based on duality and reads
\[ \mathbb{E} \left[ \sup_{t \in [0, T]} \left| \int_0^t F (u_s) \mathd s
   \right|^2 \right] \lesssim T \| F \|_{- 1}^2 \]
for
\[ \| F \|_{- 1}^2 = \sup_G \{ 2\mathbb{E} [F (u_0) G (u_0)] +\mathbb{E} [F
   (u_0) \mathcal{L}_S F (u_0)] \} . \]
Corollary~3.5 in~{\cite{GP16}} proves that this inequality holds also for FB-solutions, despite the fact
that we do not know yet whether they are Markov processes. In fact FB-solutions give us enough flexibility
to implement the classical proof from the Markovian setting, as presented for
example in~{\cite{KLO12}}. Based on the Kipnis-Varadhan inequality, Gaussian
integration by parts, and a lenghty computation we are able to show the
following result:

\begin{theorem}
  Let $u$ be a FB-solution to the stochastic Burgers equation. Then the assumptions of
  Proposition~\ref{prop:uniqueness} are satisfied and in particular the law
  of $u$ is unique.
\end{theorem}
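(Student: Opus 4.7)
The plan is to verify the three assumptions of Proposition~\ref{prop:uniqueness} in turn, leveraging the FB-structure through stationarity of $u_t \sim \mu$, the Kipnis-Varadhan estimate (Corollary~3.5 of \cite{GP16}), and the spectral description of $\mathcal{L}_S$ in chaos coordinates. The exponential moment bound \eqref{eq:exp-control} is immediate: under stationarity $u_t(\Theta_x)$ is a centered Gaussian with variance $\|\Theta_x\|_{L^2(\T)}^2 = \|\Theta\|_{L^2(\T)}^2$, independent of $(x,t)$ by spatial translation invariance, so $\mathbb{E}[e^{2 u_t(\Theta_x)}]$ is a finite constant uniform on $\T \times [0,T]$.

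Both $R^L(\varphi)$ and $Q^L$ are additive functionals of the form $\int_0^{\cdummy} F_L(u_s) \mathd s$, and the Kipnis-Varadhan estimate yields
\[
  \mathbb{E}\bigg[\sup_{t \in [0,T]} \Big| \int_0^t F_L(u_s) \mathd s \Big|^2 \bigg] \lesssim T \, \|F_L\|_{-1}^2 .
\]
Thus it suffices to show $\|F_L\|_{-1} \to 0$ for the relevant integrand of $R^L$ and to establish Cauchy-type decay in $L,M$ for the integrand of $Q^L - Q^M$. For $Q^L$ the integrand lies purely in the second Wiener chaos, so the identity $\mathcal{L}_S W_n(f_n) = W_n(\Delta f_n)$ reduces the Poisson problem to solving $\Delta f = g$ for an explicit Fourier-diagonal kernel carrying the mollifier scale $L^{-1}$; combined with hypercontractivity on a fixed chaos and Kolmogorov's criterion, this places the limit $Q$ in $C^\alpha([0,T])$ for some $\alpha > 1/2$, hence of zero quadratic variation.

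For $R^L(\varphi)$ the factor $\phi^L_s(x) = e^{h^L_s(x)}$ has an infinite chaos expansion, so a direct chaos computation is intractable. The strategy is to use Gaussian integration by parts (Malliavin calculus) to transfer derivatives off the exponential onto the remaining factor. The constant $K$ in \eqref{eq:RL} is tuned precisely to annihilate the zeroth-chaos mean of the Burgers nonlinearity paired with $\phi^L$, and the combination $u_s^2(\partial_x \Theta^L_x) - ((u_s^L(x))^2 - \int_\T (u_s^L(y))^2 \mathd y)$ is exactly what the Cole-Hopf identity $\mathcal{L}_A \phi^L \sim u_s^{\diamond 2}(\partial_x \Theta^L_x)\phi^L$ predicts up to mollifier corrections. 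After integration by parts one is left with a quantity of Sobolev norm $O(L^{-\gamma})$ for some $\gamma>0$, which together with the exponential moment bound gives $\|F_L\|_{-1} \to 0$.

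The main obstacle is precisely this last step: the infinite chaos expansion of $\phi^L$ must be estimated uniformly in $L$, while the algebraic cancellations stemming from the choice of $K$ and from the Cole-Hopf structure are tracked through every chaos level. This is the "lengthy computation" alluded to in the paragraph preceding the theorem, and it is the technical heart of the argument; once it is performed, the uniqueness of $u$ in law follows by combining the three verified hypotheses with Proposition~\ref{prop:uniqueness}.
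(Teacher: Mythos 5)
Your proposal follows essentially the same route as the paper, whose proof simply defers to Lemmas~A.1--A.3 of~\cite{GP16} and whose surrounding text describes the method exactly as you do: the Kipnis--Varadhan estimate for additive functionals, the chaos decomposition and Poisson equation $\mathcal{L}_S F = G$ for the second-chaos integrand of $Q^L$, and Gaussian integration by parts to handle the infinite chaos expansion of $e^{h^L}$ appearing in $R^L$, with stationarity giving the exponential moment bound~\eqref{eq:exp-control}. Since the paper itself omits the ``lengthy computation'' and cites it, your sketch is at the same (indeed greater) level of detail and is consistent with the cited argument.
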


\begin{proof}
  This is a combination of Lemmas~A.1--A.3 in~{\cite{GP16}}.
\end{proof}

Both the assumption of stationarity and the representation of the backward
process are only needed to control additive functionals of the FB-solution $u$
and thus to prove the convergence of $R^L$ and $Q^L$. If we had some other
means to prove this convergence, uniqueness would still follow. One
interesting situation where this is possible is the following: Let $u$ be an
energy solution of the stochastic Burgers equation which
satisfies~(\ref{eq:exp-control}), and assume that there exists a FB-solution
$v$ such that $\mathbb{P} \ll \mathbb{P}_{\tmop{FB}}$, where $\mathbb{P}$
denotes the law of $u$ on $C (\mathbb{R}_+, \CS')$ and
$\mathbb{P}_{\tmop{FB}}$ is the law of $v$. Then the convergence in
probability that we required in Proposition~\ref{prop:uniqueness} holds under
$\mathbb{P}_{\tmop{FB}}$ and thus also under $\mathbb{P}$ and therefore $u$ is still unique in law. Of
course, the assumption $\mathbb{P} \ll \mathbb{P}_{\tmop{FB}}$ is very
strong, but it can be verified in some nontrivial situations. For example in
{\cite{GJS15}} some particle systems are studied which start in an initial
condition $\mu^n$ that has bounded relative entropy $H (\mu^n| \nu)$ with
respect to a stationary distribution $\nu$, uniformly in $n$. Denoting the
distribution of the particle system started in $\pi$ by $\mathbb{P}_{\pi}$,
we get from the Markov property $H (\mathbb{P}_{\mu^n} |\mathbb{P}_{\nu}) =
H (\mu^n | \nu)$. Assume that the rescaled process converges to the law $\mathbb{P}_{\tmop{FB}}$ of a FB-solution under $\mathbb{P}_{\nu}$, and to the law of an energy solution $\P$ under $\P_{\mu^n}$. Then
\[ H (\mathbb{P}|\mathbb{P}_{\tmop{FB}}) \leqslant \liminf_{n \rightarrow
   \infty} H (\mu^n |{\nu}) < \infty, \]
and in particular $\mathbb{P} \ll \mathbb{P}_{\tmop{FB}}$; here we used that
the relative entropy is lower semicontinuous. Therefore, the scaling limits
of~{\cite{GJS15}} are still unique in law, even though they are not
FB-solutions. Let us summarize this observation:

\begin{theorem}
   Let $u$ be an energy solution to the stochastic Burgers equation which satisfies~\eqref{eq:exp-control} and let $v$ be an FB-solution. Denote the law of $u$ and $v$ with $\P$ and $\P_{\mathrm{FB}}$, respectively. Then the measure $\P$ is unique. In particular this applies for the non-stationary energy solutions constructed in~\cite{GJS15}.
\end{theorem}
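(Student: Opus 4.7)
The strategy is to verify the hypotheses of Proposition~\ref{prop:uniqueness} for $u$ by transferring, via the absolute continuity $\P \ll \P_{\mathrm{FB}}$ (which is the implicit assumption, verified below in the \cite{GJS15} application), the convergences that the previous theorem has already established under $\P_{\mathrm{FB}}$. Under $\P_{\mathrm{FB}}$ we know that $R^L(\varphi) \to 0$ uniformly on compacts in probability and that $Q^L$ converges to some process $Q$ of zero quadratic variation, and both statements concern events that are measurable with respect to the canonical $\sigma$-algebra on $C(\R_+,\CS')$.

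The key observation is that convergence in probability and the zero quadratic variation property are both preserved by absolutely continuous changes of measure. Indeed, if $\dd\P = f\,\dd\P_{\mathrm{FB}}$ and $Y^L \to 0$ in $\P_{\mathrm{FB}}$-probability uniformly on compacts, then for every $T,\varepsilon>0$
\[
   \P\Bigl(\sup_{t \in [0,T]} |Y^L_t| > \varepsilon\Bigr) = \E_{\P_{\mathrm{FB}}}\bigl[f\,\1_{\{\sup_{t \in [0,T]} |Y^L_t| > \varepsilon\}}\bigr] \longrightarrow 0
\]
by dominated convergence; the same reasoning transfers the in-probability limit of the Riemann sums defining the quadratic variation of $Q$. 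Since $u$ is an energy solution by assumption and satisfies~\eqref{eq:exp-control}, Proposition~\ref{prop:uniqueness} applies under $\P$ and forces
\[
   u_t = \partial_x \log \phi_t + \int_\T u_0(y)\,\dd y,
\]
where $\phi$ is the unique solution of~\eqref{eq:SHE} with $\phi_0(x) = e^{u_0(\Theta_x)}$. As the linear SHE is classically well posed, the law of $u$ is determined by the law of $u_0$.

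For the ``in particular'' claim, I would invoke the relative entropy argument outlined in the discussion preceding the theorem: the Markov property of the particle system in~\cite{GJS15} gives $H(\P_{\mu^n}\,|\,\P_{\nu}) = H(\mu^n\,|\,\nu)$, uniformly bounded in $n$, and lower semicontinuity of relative entropy under the scaling limit yields $H(\P\,|\,\P_{\mathrm{FB}}) < \infty$; in particular $\P \ll \P_{\mathrm{FB}}$, so the first part of the theorem applies. The only (mild) subtle point is the transfer of the zero quadratic variation property, which, as indicated above, reduces to the preservation of convergence in probability under absolutely continuous changes of measure; beyond this, the proof is a soft corollary of the previous theorem.
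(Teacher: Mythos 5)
Your proposal is correct and takes essentially the same route as the paper: the paper also proves this theorem by transferring the convergences of $R^L(\varphi)$ and $Q^L$ required in Proposition~\ref{prop:uniqueness} from $\P_{\mathrm{FB}}$ to $\P$ (these being statements about limits in probability, which survive an absolutely continuous change of measure, unlike martingale properties --- which you correctly take from the assumption that $u$ is itself an energy solution), and it obtains $\P \ll \P_{\mathrm{FB}}$ for the scaling limits of~\cite{GJS15} via the same entropy identity $H(\P_{\mu^n}|\P_\nu) = H(\mu^n|\nu)$ combined with lower semicontinuity of relative entropy. Your explicit justification of the transfer (integrability of the density $f$ plus Riemann sums for the zero quadratic variation of $Q$) merely fills in steps the paper leaves implicit.
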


\begin{remark}
   For simplicity we restricted our attention to the equation on $\T$, but everything above extends to the stochastic Burgers equation on $\R$. Also, once we understand the uniqueness of Burgers equation it is not difficult to also prove the uniqueness of its integral, the KPZ equation. For details see~\cite{GP16}.
\end{remark}

\end{document}